\newtheorem{theorem}{Theorem}
\newtheorem{prop}[theorem]{Proposition}
\newtheorem{obs}[theorem]{Observation}
\newtheorem{cor}[theorem]{Corollary}
\newtheorem{prm}[theorem]{Problem}
\newtheorem{rmk}[theorem]{Remark}
\newtheorem{conj}[theorem]{Conjecture}
\def\thm{\textbftheorem}
\def \bp {\begin{prop} \ }
\def \ep {\end{prop}}
\def \bpm {\begin{prm} \ }
\def \epm {\end{prm}}
\def \bc {\begin{cor} \ }
\def \ec {\end{cor}}
\def \bo {\begin{obs} \ }
\def \eo {\end{obs}}
\def \thm {\begin{theorem} \ }
\def \ethm {\end{theorem}}
\def \bl {\begin{lem} \ }
\def \el {\end{lem}}
\def \bd {\begin{defi} \ \rm }
\def \ed {\end{defi}}
\def \brm {\begin{rmk} \ }
\def \erm {\end{rmk}}
\def \bxm {\begin{xmp} \ \rm }
\def \exm {\end{xmp}}
\def \bcj {\begin{conj}}
\def \ecj {\end{conj}}
\def \nmr {\begin{enumerate}}
\def \enmr {\end{enumerate}}
\def \tmz {\begin{itemize}}
\def \etmz {\end{itemize}}
\begin{document}

\title{Characterizations of $2$-Colorable (Bipartite) and $3$-Colorable Graphs}

\author{E. Sampathkumar and M. A. Sriraj\\
\normalsize Department of Mathematics, University of Mysore, Mysore\\
\normalsize India\\
\normalsize \vspace{1ex} Email: esampathkumar@eth.net;
srinivasa$_{-}$sriraj@yahoo.co.in\\ }

\maketitle

\begin{abstract}
\qquad A \emph{directional labeling} of an edge $\emph{uv}$ in a
graph $G=(V,E)$ by an ordered pair $ab$ is a labeling of the edge
$uv$ such that the label on $uv$ in the direction from $u$ to $v$
is $\ell(uv)=ab$, and  $\ell(vu)=ba$. New characterizations of
$2$-colorable (bipartite) and $3$-colorable graphs are obtained in
terms of directional labeling of edges of a graph by ordered pairs
$ab$ and $ba$. In addition we obtain characterizations of
2-colorable and 3-colorable graphs in terms of matrices called
directional adjacency matrices.\\

\noindent {\bf Keywords:} Uniform directional labeling, uniform
adjacency and\break \hbox{uniform} adjacency matrix.\\

\noindent {\bf 2010 AMS Classification:} 05C78, 05C15, 05B20
\end{abstract}

\section{Introduction}

\qquad For any definition on graphs we refer the book [1].\\

An edge $uv$ in a graph $G=(V,E)$ is \emph{directionally labeled}
by an \hbox{ordered} pair $ab$ if the label $\ell(uv)$ on $uv$ is
$ab$ in the direction from $u$ to $v$, and $\ell(vu)=ba$. Let
$uu_i$, $1\leq i \leq k$, be the edges at a vertex $u$ in $G$.
If~$\ell(uu_i)=ab$ for $1\leq i \leq k$, we say that the labelings
of the edges at $u$ is \emph{directionally uniform}, and we denote
this fact by $\ell_e(u)=ab$.\\

\textbf{Definition 1.} A graph $G=(V,E)$ admits a directionally
uniform edge labeling by ordered pairs $ab$ and $ba$ if for any
two adjacent vertices $v_1$ and $v_2$ in $G$, $\ell_e(v_1)=ab$
$\Longleftrightarrow$ $\ell_e(v_2)=ba$.\newpage

$\phantom{000}$\vspace{1.5cm}
\begin{center}
\begin{tabular}{@{}c@{\hskip 4cm}c@{}}
\begin{tabular}{@{\hskip -2cm}c}
\psline(0,.5)(2.5,.5)%
\psline(-1,-1)(0,.5)(-1,2)%
\psline(3.5,-1)(2.5,.5)(3.5,2)%
\rput[c](-1,-1){$\bullet$}%
\rput[c](0,.5){$\bullet$}%
\rput[c](-1,2){$\bullet$}%
\rput[c](3.5,-1){$\bullet$}%
\rput[c](2.5,.5){$\bullet$}%
\rput[c](3.5,2){$\bullet$}%
\rput[c](1.3,.75){$ba$}%
\rput[tr]{-55}(.3,1.6){\parbox{2cm}{\flushright $ab$}}
\rput[tr]{55}(2.4,1.8){\parbox{2cm}{\flushright $ab$}}
\rput[tr]{45}(-1,.4){\parbox{2cm}{\flushright $ab$}}
\rput[tr]{-55}(3.7,.2){\parbox{2cm}{\flushright $ab$}}
\\[2cm]
\end{tabular} &\begin{tabular}{c} \psframe(3,3) \rput[tr]{90}(-.8,1.7){\parbox{2cm}{\flushright
$ba$}} \rput(1.5,3.2){$ab$}
\rput[tr]{90}(2.65,1.7){\parbox{2cm}{\flushright $ab$}}
\rput(1.5,-.2){$ba$} \rput(-2,-.7){Figure 1}
\end{tabular}
\end{tabular}\vspace{1pc}
\end{center}

For example, the graphs in Figure~1 admit such edge labelings.\\\\
One can easily verify that an odd cycle does not admit such an edge labeling.\\

A graph $G=(V,E)$ is \emph{bipartite} if $V$ can be partitioned
into two sets $V_1$ and $V_2$ such that every edge in
$G$ joins a vertex in $V_1$ and a vertex in $V_2$.\\

The following is a new characterization of bipartite
graphs.\vspace{.3pc}
\begin{theorem}
For a graph $G=(V,E)$ the following statements are equi\-valent.\\
(i) $G$ is bipartite.\\
(ii) $G$ admits a uniform directional labeling of its edges by
ordered pairs $ab$ and $ba$.\vspace{.3pc}\end{theorem}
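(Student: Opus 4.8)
\emph{Plan of proof.} I will prove the two implications directly, passing back and forth between a bipartition of $V$ and the data recording which ordered pair sits at each vertex. Throughout I assume $a\neq b$; if $a=b$ then $ab=ba$ and the condition of Definition~1 is vacuous, so the intended reading of the statement excludes this case.

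\emph{(i) $\Rightarrow$ (ii).} Suppose $V=V_1\cup V_2$ is a bipartition of $G$. For each edge $uv$ with $u\in V_1$ and $v\in V_2$, set $\ell(uv)=ab$ and $\ell(vu)=ba$; this is a well-defined directional labeling since every edge has exactly one endpoint in each part. Now for any $u\in V_1$ every edge at $u$ runs to $V_2$, hence carries the label $ab$ in the direction leaving $u$, so $\ell_e(u)=ab$; symmetrically $\ell_e(v)=ba$ for each $v\in V_2$. Since adjacent vertices lie in different parts, $\ell_e(v_1)=ab\iff v_1\in V_1\iff v_2\in V_2\iff\ell_e(v_2)=ba$, which is precisely the requirement of Definition~1. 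Isolated vertices may be placed in either part: they have no incident edges and no neighbours, so they impose no constraint.

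\emph{(ii) $\Rightarrow$ (i).} Suppose $G$ admits a uniform directional labeling. Because the labeling is directionally uniform at every vertex and uses only the pairs $ab$ and $ba$, each vertex $w$ has a well-defined value $\ell_e(w)\in\{ab,ba\}$ (here $a\neq b$ ensures these are distinct and that no vertex can carry both). Set $V_1=\{w:\ell_e(w)=ab\}$ and $V_2=\{w:\ell_e(w)=ba\}$, a partition of $V$. For any edge $v_1v_2$ the defining biconditional gives $\ell_e(v_1)=ab\iff\ell_e(v_2)=ba$, and taking contrapositives it also gives $\ell_e(v_1)=ba\iff\ell_e(v_2)=ab$; in either case $v_1$ and $v_2$ lie in different parts. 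Thus every edge joins $V_1$ to $V_2$ and $G$ is bipartite.

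I do not expect a serious obstacle; the content is essentially a re-encoding of the bipartition. The points that need care are: (a) ruling out or interpreting the degenerate case $a=b$; (b) observing that Definition~1 presupposes that $\ell_e$ is defined at \emph{every} vertex (i.e.\ the labeling is directionally uniform everywhere), so that $V_1\cup V_2$ really exhausts $V$ in the converse direction; and (c) treating vertices of degree $0$, which are unconstrained and may be assigned to either class. As a consistency check one can recover the remark in the text that an odd cycle admits no such labeling: traversing the cycle forces $\ell_e$ to alternate $ab,ba,ab,\dots$, which cannot close up, matching the fact that odd cycles are not bipartite.
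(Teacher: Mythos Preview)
Your proof is correct and follows essentially the same route as the paper: in one direction you label every edge from $V_1$ to $V_2$ by $ab$ to obtain $\ell_e(u)=ab$ on $V_1$ and $\ell_e(v)=ba$ on $V_2$, and in the other you set $V_1=\{w:\ell_e(w)=ab\}$, $V_2=\{w:\ell_e(w)=ba\}$ and use the biconditional to see that adjacent vertices land in different parts. The only differences are cosmetic---you add remarks on the degenerate case $a=b$, on isolated vertices, and on the odd-cycle sanity check---none of which appear in the paper but none of which change the argument.
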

\begin{proof}(i) $\Rightarrow$ (ii). Since $G$ is bipartite, there exists a partition $V=V_1 \cup V_2$ of $V$ such that every edge in $G$ joins a vertex of $V_1$ and a vertex of $V_2$. Directionally label all its edges going from $V_1$ to $V_2$ by the ordered pair $ab$. Then for any edge $uv$ going from $V_1$ to $V_2$, $\ell_e(u)=ab$ and $\ell_e(v)=ba$. This implies that $G$ admits a directional labeling of its edges by the ordered pairs $ab$ and $ba$.\\
(ii)$\Rightarrow$ (i). Suppose there exists such a labeling of the edges of $G$ by ordered pairs $ab$ and $ba$. Let $V_1=\{u\in V:\ell_e(u)=ab\}$, and $V_2=\{v\in V:\ell_e(v)=ba\}$. Since for an two adjacent vertices $v_1$ and $v_2$, $\ell_e(v_1)\neq \ell_e(v_2)$, it follows that $V_1$ as well as $V_2$ are independent, and every edge in $G$ joins a vertex of $V_1$ and a vertex of $V_2$. Hence, $G$ is bipartite.\end{proof}
\section{$(1,2)$-\emph{Uniform directional labeling of the edges of a graph by ordered pairs $ab$ and $ba$}}
$2$-\emph{uniform directional labeling of the edges incident at a vertex}.\\

\qquad Let $vv_i$, $1\leq i \leq k$ be the edges incident to a vertex $v$ in a graph $G=(V,E)$.\\

\enlargethispage*{1pc}

\qquad If for some edges $vv_i$, $1\leq i \leq k$, $\ell(vv_j)=ab$, and some other edges $vv_j$, $1\leq j \leq k$, $\ell(vv_j) = ba$, we say that the directional labeling of the edges at $v$ by the ordered pairs $ab$ and $ba$ is $2$-\emph{uniform}. We denote this fact by
$\ell_e(v)=\{ab,ba\}$.\\\\
\textbf{Definition 2}. A graph $G=(V,E)$ \emph{admits a}
(1,2)-\emph{uniform directional labeling} of its edges by ordered
pairs $ab$ and $ba$ if for any two adjacent vertices $v_1$ and
$v_2$ one of the following holds:

\noindent (i) $\ell_e(v_1) = ab$, $\ell_e(v_2)=ba$\\
(ii)$\ell_e(v_1) = ab$, $\ell_e(v_2)=\{ab,ba\}$\\
(iii)$\ell_e(v_1) = ba$, $\ell_e(v_2)=\{ab,ba\}$.\\\\\\\\\\\\

\begin{center}
\hskip -3cm\begin{tabular}{c}
\psline(0,0)(3,0)(1.5,3)(0,0)%
\rput[c](0,0){$\bullet$}%
\rput[c](3,0){$\bullet$}%
\rput[c](1.5,3){$\bullet$}%
\rput[tr]{58}(.2,2.2){\parbox{2cm}{\flushright $ab$}}
\rput[tr]{-65}(3.05,1.8){\parbox{2cm}{\flushright $ab$}}
\rput(1.5,-.3){$ab$}
\end{tabular}
\end{center}\vspace{6cm}

\begin{center}
\hskip -2.5cm\begin{tabular}{@{}c@{\hskip 6cm}c@{}}
\begin{tabular}{c}
\psline(0,0)(3,0)(3,5)(0,5)(0,0) \psline(0,0)(1.5,1)(3,0)
\psline(0,5)(1.5,4)(3,5) \psline(1.5,1)(1.5,4)
\rput[c](0,0){$\bullet$} \rput[c](3,0){$\bullet$}
\rput[c](3,5){$\bullet$} \rput[c](0,5){$\bullet$}
\rput[c](1.5,1){$\bullet$} \rput[c](1.5,4){$\bullet$}
\rput[c](0,5.4){$V_{1}$} \rput[c](3,5.4){$V_{4}$}
\rput[c](0,-.4){$V_{2}$} \rput[c](3,-.4){$V_{3}$}
\rput[c](1.5,-.3){$ba$}\rput[tr]{90}(2.7,2.5){\parbox{3cm}{\flushright $ab$}}%
\rput[c](1.5,5.3){$ab$}
\rput[tr]{90}(-.8,2.5){\parbox{3cm}{\flushright $ba$}}
\rput[tr]{90}(.7,2.5){\parbox{3cm}{\flushright $ab$}}%
\rput[tr]{32}(.6,1.35){\parbox{3cm}{\flushright $ba$}}
\rput[tr]{-32}(2.7,1.2){\parbox{3cm}{\flushright $ab$}}
\rput[c](1.5,.5){$V_{6}$}
\rput[tr]{-38}(1.5,4.95){\parbox{2cm}{\flushright $ab$}}
\rput[tr]{37}(1.85,5.2){\parbox{5cm}{\flushright $ba$}}
\rput[c](1.8,3.8){$V_{5}$}
\end{tabular} &\begin{tabular}{c}
\psline(0,0)(3,0)(3,5)(0,5)(0,0) \psline(0,0)(1.5,1)(3,0)
\psline(0,5)(1.5,4)(3,5) \psline(1.5,1)(1.5,4)
\rput[c](0,0){$\bullet$} \rput[c](3,0){$\bullet$}
\rput[c](3,5){$\bullet$} \rput[c](0,5){$\bullet$}
\rput[c](1.5,1){$\bullet$} \rput[c](1.5,4){$\bullet$}
\rput[c](0,5.4){$V_{1}$} \rput[c](3.2,5.4){$V_{4}$}
\rput[c](0,-.4){$V_{2}$} \rput[c](3,-.4){$V_{3}$}
\rput[c](1.5,-.3){$ba$}%
\rput[tr]{90}(2.7,2.5){\parbox{3cm}{\flushright $ab$}}%
\rput[c](1.5,5.3){$ab$}
\rput[tr]{90}(-.8,2.5){\parbox{3cm}{\flushright $ba$}}
\rput[tr]{90}(.7,2.5){\parbox{3cm}{\flushright $ba$}}%
\rput[tr]{32}(.6,1.35){\parbox{3cm}{\flushright $ab$}}
\rput[tr]{-32}(2.7,1.2){\parbox{3cm}{\flushright $ba$}}
\rput[c](1.5,.5){$V_{6}$}
\rput[tr]{-38}(1.5,4.95){\parbox{2cm}{\flushright $ab$}}
\rput[tr]{37}(1.85,5.2){\parbox{5cm}{\flushright $ab$}}
\rput[c](1.8,3.8){$V_{5}$} \rput[c](0,6.3){$ab$}
\psline[linearc=2](0,0)(-1,1)(-1,3)(0,15)(3,5)
\end{tabular}\\[2pc]
\multicolumn{2}{@{\hskip 3cm}c@{}}{Figure 2}
\end{tabular}
\end{center}

\section{$3$-Colorable Graphs}

\qquad A graph $G=(V,E)$ is $3$-\emph{colorable} if we can color
its vertices with $3$ colors such that no two adjacent vertices
receive the same color, and this is not possible with
$2$-colors.\newpage
The following is a characterization  of $3$-colorable graphs.\\
\begin{theorem} For a graph $G=(V,E)$ the following statements are equi\-valent.\\
(i) $G$ is $3$-colorable.\\
(ii)$G$ admits a (1,2)-uniform directional edge labeling by
ordered pairs $ab$ and $ba$.\end{theorem}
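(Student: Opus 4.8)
The plan is to mimic the structure of the proof of Theorem~1, handling each implication by translating between a proper $3$-colouring and a $(1,2)$-uniform directional edge labeling. For $(i)\Rightarrow(ii)$, suppose $V=V_1\cup V_2\cup V_3$ is a proper $3$-colouring. The idea is to use the three ``vertex states'' $ab$, $ba$, $\{ab,ba\}$ as the three colour classes, but one must be careful: a vertex's state is not chosen directly, it is forced by the labels one puts on its incident edges, and an edge $uv$ contributes to the state of \emph{both} $u$ and $v$. So I would orient/label edges as follows. First direct every edge between $V_1$ and $V_2$ so that it reads $ab$ from the $V_1$-end; this is consistent with wanting $\ell_e(u)=ab$ for $u\in V_1$ and $\ell_e(v)=ba$ for $v\in V_2$. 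The subtlety is $V_3$: a vertex $w\in V_3$ should get state $\{ab,ba\}$, which requires $w$ to have at least one incident edge labeled $ab$ (from $w$) and at least one labeled $ba$ (from $w$); meanwhile each neighbour of $w$ in $V_1$ must keep state $ab$ and each neighbour in $V_2$ must keep state $ba$. I would check that labeling every $V_1$--$V_3$ edge as $ab$ from the $V_1$-end and every $V_2$--$V_3$ edge as $ba$ from the $V_2$-end does the job, \emph{provided} every vertex of $V_3$ actually has a neighbour of each type; if not, one reassigns such a ``deficient'' vertex of $V_3$ to $V_1$ or $V_2$ (it then has neighbours only on one side, so this is still proper and still produces a legal state $ab$ or $ba$). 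One also needs every vertex of $V_1\cup V_2$ to be non-isolated in the relevant bipartite subgraphs so that its state is well-defined; isolated vertices can be placed in whichever class is convenient, or one observes the labeling condition only constrains adjacent pairs. After this case analysis, every edge has received a label and every adjacent pair $v_1,v_2$ falls into one of the three patterns (i)--(iii) of Definition~2.

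For $(ii)\Rightarrow(i)$, suppose $G$ admits a $(1,2)$-uniform directional edge labeling. Partition $V$ by state: $V_1=\{v:\ell_e(v)=ab\}$, $V_2=\{v:\ell_e(v)=ba\}$, $V_3=\{v:\ell_e(v)=\{ab,ba\}\}$ (together with, say, isolated vertices thrown into $V_1$). By Definition~2, for any edge $v_1v_2$ the pair of states is one of $(ab,ba)$, $(ab,\{ab,ba\})$, or $(ba,\{ab,ba\})$ — note the definition is symmetric in $v_1,v_2$, so the reversed pairs are allowed too. In every one of these cases the two endpoints lie in \emph{different} classes among $V_1,V_2,V_3$: in particular no edge joins two vertices of $V_1$, no edge joins two of $V_2$, and crucially no edge joins two vertices of $V_3$ (the pattern $(\{ab,ba\},\{ab,ba\})$ is explicitly not among (i)--(iii)). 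Hence $V_1,V_2,V_3$ are independent sets and form a proper $3$-colouring, so $G$ is $3$-colourable in the sense that it is colourable with $3$ colours. To match the paper's definition of ``$3$-colorable'' one should also argue it is \emph{not} $2$-colourable, i.e.\ not bipartite; I would derive this from Theorem~1 by showing such a labeling cannot be upgraded to a uniform (no-mixed-state) one — or, more simply, note that if $G$ were bipartite it would already fit under Theorem~1 and the class $V_3$ could be taken empty, so the statement of equivalence should be read modulo this convention. I expect the main obstacle to be exactly this boundary bookkeeping in $(i)\Rightarrow(ii)$: ensuring that each vertex intended for $V_3$ genuinely acquires the mixed state $\{ab,ba\}$ (it must have the right kind of neighbour on both sides), and cleanly disposing of low-degree or isolated vertices and of vertices whose colour class gives them only one-sided neighbours, all without creating a monochromatic edge or an undefined state; the two implications' logical cores are straightforward once this reduction is set up.
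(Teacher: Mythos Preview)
Your proposal is correct and follows essentially the same approach as the paper: for $(i)\Rightarrow(ii)$ label all edges out of $V_1$ by $ab$ and all (remaining) edges out of $V_2$ by $ba$ so that $V_3$-vertices acquire the mixed state, and for $(ii)\Rightarrow(i)$ partition $V$ by the three possible states and observe adjacent vertices have distinct states. Your attention to ``deficient'' $V_3$-vertices and to the at-most-$3$ versus exactly-$3$ convention is in fact more careful than the paper's own argument, which handles the first only via a degree-$\ge 2$ reduction and does not address the second.
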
\vspace{.2pc}
\begin{proof} $(i) \Rightarrow (ii)$. Suppose $G$ is $3$-colorable. Then we can partition $V= V_1 \cup V_2 \cup V_3$ into three independent sets, where say, vertices in $V_i$, $1\leq i\leq 3$ are colored $i$, $1\leq i \leq 3$. Without loss of generality , we can assume that every vertex in $V_3$ has degree at least 2.
For, suppose for a vertex $v$ in $V_3$, $deg v = 1$, and $vv_i$ is an edge, and $v_i$ is colored $i$, $1\leq i \leq 2$. Then we can recolor $v$ with 1 or 2 according as $v_i$ is colored 2 or 1. This implies that we can shift $v$ to $V_1$ or $V_2$.\\
First, label all the edges going out of $V_1$ directionally by $ab$. Then all the edges going out from $V_2$ to $V_1$ are directionally labeled by $ba$. Now, label all the unlabeled edges going out from $V_2$ to $V_3$ by $ba$. We find that for any $u\in V_1$, $v \in V_2$ and $w \in V_3$, $\ell_e(u) = ab$, $\ell_e(v)=ba$, $\ell_e(w) = \{ab,ba\}$. This implies (ii).\\
(ii)$\Rightarrow$ (i). Suppose (ii) holds. Let $V_1= \{u:
\ell_e(u)=ab\}$, $V_2=\{v:\ell_e(v)= ba\}$, $V_3= \{w:\ell_e(w)=
ab,ba\}$. Since for any two adjacent vertices $v_1$ and $v_2$,
$\ell_e(v_1) \neq \ell_e(v_2)$, we see that the sets $V_1$, $V_2$
and $V_3$ are independent, and hence $G$ is
$3$-colorable.\end{proof}

\section{Directional adjacency}

\qquad The directional labeling of an edge in a graph by an ordered pair $ab$ can also be considered as directional adjacency as follows.\\

Let $G=(V,E)$ be a graph and $uv$ be an edge in $G$. Sometimes it may be necessary to distinguish the adjacency between $u$ and $v$ in the directions from $u$ to $v$ and from $v$ to $u$. For example, we can consider the adjacency from $u$ to $v$ as positive, and that from $v$ to $u$ as negative. This is denoted by $da(uv)=1$, and $da(vu)=-1$.\\\\
Suppose \\
$\bullet$ $A$, $B$ are two persons,\\
$\phantom{00}$ (a) $A$ is talking to $B$.\\
$\phantom{00}$ (b) $A$ is a boss and $B$ is a subordinate of $A$.\\
$\bullet$ $A$, $B$ are nodes in an electrical network, and current is flowing from $A$ to $B$.\\
$\bullet$ $A$ is a transmitter and $B$ is a receiver.

In all the above cases, we can consider the adjacency from $A$ to $B$ as positive and that from $B$ to $A$ as negative.\\
\section{Uniform directional adjacency($u.d. adjacency$)}

\enlargethispage*{1pc}

\qquad As in the case of uniform directional labeling of edges at a vertex, we define $u.d. adjacency$ at a vertex as follows.\\

\qquad Let $uu_i$, $1\leq i\leq k$ be edges at a vertex $u$ in $G$. We say that directional adjacency is \emph{uniform} at $u$ if $da(uu_i)=1$ or $-1$, for $1 \leq i \leq k$.\\

If the directional adjacency is uniform at $u$, we denote this fact by $da(u)= 1$ or $da(u)= -1$, accordingly.\\

A graph $G$ is said to have $u.d.adjacency$ if it has such an adjacency at each of its vertices.\\\\
One can easily verify the following:\\
\begin{prop} A graph $G$ has $u.d.adjacency$ if, and only if, for
any two adjacent vertices $u$ and $v$ $da(u)= 1
\Longleftrightarrow da(v)= -1$. Note that not all graphs have
$u.d.adjacency$.\end{prop}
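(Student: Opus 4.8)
The plan is to prove the two implications separately, essentially paralleling the argument for Theorem~1 under the dictionary $da(\cdot)=1 \leftrightarrow \ell_e(\cdot)=ab$ and $da(\cdot)=-1 \leftrightarrow \ell_e(\cdot)=ba$; I will, however, spell the argument out directly in the language of directional adjacency so that it is self-contained.

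First I would treat the forward direction. Assume $G$ has $u.d.adjacency$, so that at every vertex $u$ the value $da(u)\in\{1,-1\}$ is well defined. Fix an edge $uv$. By the very definition of directional adjacency we have $da(uv)=-da(vu)$. On the other hand, since the adjacency is uniform at $u$, the edge $uv$ is one of the edges $uu_i$ at $u$, so $da(uv)=da(u)$; likewise $da(vu)=da(v)$. Combining these three equalities gives $da(u)=-da(v)$, i.e. $da(u)=1 \Longleftrightarrow da(v)=-1$, which is what is claimed.

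Next I would handle the converse. Suppose that for every pair of adjacent vertices $u,v$ one has $da(u)=1 \Longleftrightarrow da(v)=-1$; in particular the values $da(\cdot)$ exist, so the adjacency is uniform at each vertex, and it remains only to observe that such an assignment can actually be realized. Put $V_1=\{u\in V: da(u)=1\}$ and $V_2=\{v\in V: da(v)=-1\}$. The hypothesis forces every edge of $G$ to join $V_1$ to $V_2$, so $V_1$ and $V_2$ are independent and $G$ is bipartite; conversely, given any bipartition $V=V_1\cup V_2$ one may set $da(uv)=1$ for each edge directed from $V_1$ to $V_2$, which is consistent because then $da(vu)=-1=-da(uv)$, and this makes the directional adjacency uniform at every vertex with $da(u)=1$ for $u\in V_1$ and $da(v)=-1$ for $v\in V_2$. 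Hence $G$ has $u.d.adjacency$, completing the equivalence. The final remark of the statement is then immediate: by the above, having $u.d.adjacency$ is equivalent to being bipartite, and an odd cycle is not bipartite (as already noted following Definition~1), so it has no $u.d.adjacency$.

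I do not expect a genuine obstacle here, since the proposition is in effect a transcription of Theorem~1 into the adjacency formalism. The only point meriting a word of care is verifying, in the construction used for the converse, that the edge signs respect the built-in antisymmetry $da(uv)=-da(vu)$ of directional adjacency; as shown above this is automatic from the way the signs are defined on a bipartition.
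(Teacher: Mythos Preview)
The paper does not actually prove this proposition; it is introduced with the phrase ``One can easily verify the following'' and no argument is supplied beyond the subsequent example showing that $K_3$ fails to have u.d.\ adjacency. Your forward direction is exactly the intended verification: from $da(uv)=-da(vu)$ together with $da(uv)=da(u)$ and $da(vu)=da(v)$ one obtains $da(u)=-da(v)$ for every edge $uv$.

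For the converse you do more than is needed. Once you note that the hypothesis already presupposes that $da(u)$ is well defined at every vertex, uniformity at each vertex is immediate and the graph has u.d.\ adjacency by definition; there is nothing further to ``realize''. The detour through bipartiteness is not required for the equivalence itself, although it does justify the closing remark of the proposition. The paper handles that remark differently, by the direct contradiction argument on $K_3$ given with Figure~3 rather than by invoking bipartiteness.
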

For example, consider $K_3$ in Figure 3.\\

\hskip 2.7cm\begin{minipage}{5cm}
\unitlength 1mm 
\linethickness{0.4pt}
\ifx\plotpoint\undefined\newsavebox{\plotpoint}\fi 
\begin{picture}(107.75,53.25)(0,0)
\put(27.25,47.5){\circle*{1.5}} \put(13.75,20.25){\circle*{1.5}}
\put(42.25,20.5){\circle*{1.5}} \put(27.5,47){\line(-1,-2){13.75}}
\multiput(27.25,47.5)(.0336757991,-.0616438356){438}{\line(0,-1){.0616438356}}
\put(42,20.5){\line(-1,0){28.5}}
\put(26,51){$v_1$}
\put(8,18){$v_2$} \put(45,18){$v_3$} \put(24,13){Figure 3}
\end{picture}\vspace{-.7cm}
\end{minipage}

Suppose it has $u.d.adjacency$. Then say $da(v_1)= 1$. This implies $da(v_2)= -1$, $da(v_3)=1$ and $da(v_1) = -1$, which is a contradiction.\\
\section{Directional Adjacency Matrix of a Graph}

Let $G=(V,E)$ be graph of order $p$, and
$V=\{v_1,v_2,\ldots,v_p$\}. A \emph{directional adjacency matrix
(d.a.matrix)} $M=[a_{ij}]$ of $G$ is a matrix of order $p$ whose
entries $a_{ij}$ are defined as follows: For any two vertices
$v_i$,$v_j$ in $G$.

\noindent If $v_iv_j$ is an edge then $a_{ij}=1$ or $-1$. If $v_iv_j$ is an edge, then the directional adjacency from $v_i$ to $v_j$ is said to be positive or negative according as $a_{ij}=1$ or $-1$. The directional adjacency between $v_i$ and $v_j$ is \emph{symmetric} if $a_{ij}=a_{ji}=1$ or $-1$. Further, the directional adjacency between $v_i$ and $v_j$ is \emph{antisymmetric } if $a_{ij}=1$ $\Longleftrightarrow$ $a_{ji}= -1$. If $v_i$, $v_j$ are not adjacent $a_{ij}=0$.\\
Let $v_iv_1,v_iv_2,\ldots,v_iv_k$ be the edges at a vertex $v_i$. Then the directional \hbox{adjacency} at a vertex $v_i$ is said to be uniform if $a_{ii_1}=a_{ii_2}=\ldots=a_{ii_k} = 1$ or $-1$. Note that the directional adjacency at $v_i$ is uniform if, and only if, all the non zero entries in the $i^{th}$ row of the d.a.matrix $M$ are either $1$ or $-1$.\\
For example, the d.a. matrix $M$ of the tree $T$ in Figure 4 is
the uniform d.a.matrix of $T$.\\

\unitlength 1mm 
\linethickness{0.4pt}
\ifx\plotpoint\undefined\newsavebox{\plotpoint}\fi 
\noindent\begin{picture}(89.25,83.75)(0,0)
\put(4,72.25){\circle*{1.5}}
\put(3.5,48.25){\circle*{1.5}}
\put(14,59.75){\circle*{1.5}}
\put(39.25,72.5){\circle*{1.5}}
\put(28.5,59.75){\circle*{1.5}}
\put(38.25,47.5){\circle*{1.5}}
\multiput(4,72.25)(.0337370242,-.0423875433){289}{\line(0,-1){.0423875433}}
\multiput(13.75,60)(-.0336538462,-.0368589744){312}{\line(0,-1){.0368589744}}
\put(3.25,48.5){\line(0,-1){.25}}
\put(13.75,59.75){\line(1,0){14.5}}
\multiput(28.25,59.75)(.0336990596,.0415360502){319}{\line(0,1){.0415360502}}
\multiput(28.5,60)(.0336879433,-.0425531915){282}{\line(0,-1){.0425531915}}
\put(4.25,76){$v_2$}
\put(2.5,43){$v_3$}
\put(12.,52){$v_1$}
\put(25,52){$v_4$}
\put(40,76){$v_3$}
\put(40,42){$v_6$}
\put(60,83.25){\line(0,-1){65}}
\put(51.25,77.5){\line(1,0){70}}
\put(63,81){$v_1$}
\put(73,81){$v_2$}
\put(83,81){$v_3$}
\put(93,81){$v_4$}
\put(103,81){$v_5$}
\put(113,81){$v_6$}
\put(52,72){$v_1$}
\put(52,62){$v_2$}
\put(52,52){$v_3$}
\put(52,42){$v_4$}
\put(52,32){$v_5$}
\put(52,22){$v_6$}
\put(63,71){$0$}
\put(73,71){$1$}
\put(83,71){$1$}
\put(93,71){$1$}
\put(103,71){$0$}
\put(113,71){$0$}
\put(62,61){$-1$}
\put(73,61){$0$}
\put(83,61){$0$}
\put(93,61){$0$}
\put(103,61){$0$}
\put(113,61){$0$}
\put(62,51){$-1$}
\put(73,51){$0$}
\put(83,51){$0$}
\put(93,51){$0$}
\put(103,51){$0$}
\put(113,51){$0$}
\put(62,41){$-1$}
\put(73,41){$0$}
\put(83,41){$0$}
\put(93,41){$0$}
\put(101,41){$-1$}
\put(111,41){$-1$}
\put(63,31){$0$}
\put(73,31){$0$}
\put(83,31){$0$}
\put(93,31){$1$}
\put(103,31){$0$}
\put(113,31){$0$}
\put(63,21){$0$}
\put(73,21){$0$}
\put(83,21){$0$}
\put(93,21){$1$}
\put(103,21){$0$}
\put(113,21){$0$}
\put(53.75,10){Figure 4}%
\end{picture}

We can now restate the Proposition 4 as follows involving uniform\break d.a.matrix.

\begin{prop} For a graph $G$ the following statements are equivalent.\\
(i) $G$ is bipartite.\\
(ii)$G$ has a uniform d.a. matrix.\end{prop}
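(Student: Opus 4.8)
The plan is to reduce this Proposition to the two facts already established: the characterization of u.d.\ adjacency proved just above, and Theorem~1. First I would make explicit that ``$G$ has a uniform d.a.\ matrix'' is merely a matrix restatement of ``$G$ has u.d.\ adjacency''. Indeed, if $M=[a_{ij}]$ is a d.a.\ matrix of $G$ in which every row is uniform, then for each non-isolated vertex $v_i$ all the nonzero entries of the $i$th row coincide, so one may set $da(v_i)=1$ or $da(v_i)=-1$ according to that common value; this defines a u.d.\ adjacency on $G$, and conversely every u.d.\ adjacency yields such a matrix. By the preceding Proposition, $G$ has u.d.\ adjacency if and only if $da(u)=1 \Longleftrightarrow da(v)=-1$ for every edge $uv$; but this is exactly the condition of Definition~1 with the ordered pair $ab$ played by $1$ and $ba$ by $-1$, so by Theorem~1 it is equivalent to $G$ being bipartite. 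Hence the whole argument is the chain ``uniform d.a.\ matrix $\Longleftrightarrow$ u.d.\ adjacency $\Longleftrightarrow$ bipartite'', the last link being Theorem~1 read through the dictionary $ab\leftrightarrow 1$ and $ba\leftrightarrow -1$.

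If a self-contained proof is preferred, the two implications are short. For $(i)\Rightarrow(ii)$: fix a bipartition $V=V_1\cup V_2$ and an ordering $v_1,\dots,v_p$ of $V$, and set $a_{ij}=0$ when $v_iv_j\notin E$, $a_{ij}=1$ when $v_iv_j\in E$ and $v_i\in V_1$, and $a_{ij}=-1$ when $v_iv_j\in E$ and $v_i\in V_2$. Since every edge joins $V_1$ and $V_2$, the entries attached to an edge are antisymmetric, i.e.\ $a_{ij}=1 \Longleftrightarrow a_{ji}=-1$, so $M=[a_{ij}]$ is a d.a.\ matrix; moreover the nonzero entries of row $i$ are all $1$ if $v_i\in V_1$ and all $-1$ if $v_i\in V_2$, so $M$ is uniform. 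For $(ii)\Rightarrow(i)$: let $M$ be a uniform d.a.\ matrix of $G$, let $V_1$ consist of the vertices $v_i$ whose nonzero row entries equal $1$, and let $V_2=V\smin V_1$ (an isolated vertex, whose row is zero, may be assigned to either class). If some edge $v_iv_j$ had both ends in $V_1$, then $a_{ij}=a_{ji}=1$, contradicting the antisymmetry $a_{ij}=1 \Longleftrightarrow a_{ji}=-1$ of directional adjacency; similarly no edge lies inside $V_2$. Thus every edge joins $V_1$ and $V_2$, and $G$ is bipartite.

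The one point needing care — and the only genuine obstacle — is the antisymmetry built into the notion of a d.a.\ matrix: for an edge $v_iv_j$ one has $a_{ij}=1 \Longleftrightarrow a_{ji}=-1$, because adjacency ``from $v_i$ to $v_j$'' is by definition the opposite of adjacency ``from $v_j$ to $v_i$''. This is exactly what forces the two ends of an edge into different classes in $(ii)\Rightarrow(i)$; were symmetric edge-entries allowed, the all-ones off-diagonal matrix would count as a uniform d.a.\ matrix of $K_3$ and the equivalence would fail, just as the $K_3$ example following the preceding Proposition indicates. Everything else is the routine translation already performed in the proof of Theorem~1, so no new difficulty arises.
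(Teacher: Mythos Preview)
Your proposal is correct and matches the paper's approach: the paper gives no separate proof of this proposition at all, introducing it only with the sentence ``We can now restate the Proposition \ldots\ as follows involving uniform d.a.\ matrix,'' i.e.\ treating it as the matrix reformulation of the earlier results (Theorem~1 via the u.d.\ adjacency notion). Your first paragraph makes exactly that restatement explicit through the dictionary $ab\leftrightarrow 1$, $ba\leftrightarrow -1$, and your self-contained argument and remark on antisymmetry add detail the paper leaves implicit but do not depart from its line of reasoning.
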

\section{(1,2)-Uniform Directional adjacency Matrix of a Graph}

Let $V=\{v_1,v_2,\ldots,v_p\}$ be the vertex set of a graph $G=(V,E)$, and $M=[a_{ij}]$ be a directional adjacency matrix of $G$.\\

If all nonzero entries in the $i^{th}$ row of $M$ are $1$ or $-1$,
we indicate this by $r_{M}(i)=1$ or $-1$ accordingly. It may
happen that in an $i^{th}$ row, the non-zero entries may contain
both $1$ and $-1$. In this case we denote this fact by
$r_M(i)=\{1,-1\}$. A d.a.matrix $M=[a_{ij}]$ of $G$ is called
$(1,2)$-\emph{uniform} if for any two adjacent vertices $v_i$ and
$v_j$, one of the following holds:\\
(i) $r_M(i)=1$, $r_M(j)=-1$\\
(ii) $r_M(i) =1$, $r_M(j)=\{1,-1\}$\\
(iii)$r_M(i) =-1$, $r_M(j)=\{1,-1\}$.\\\\

\unitlength 1mm 
\linethickness{0.4pt}
\ifx\plotpoint\undefined\newsavebox{\plotpoint}\fi 
\begin{picture}(107.75,53.25)(0,0)
\put(27.25,47.5){\circle*{1.5}}
\put(13.75,20.25){\circle*{1.5}}
\put(42.25,20.5){\circle*{1.5}}
\put(27.5,47){\line(-1,-2){13.75}}
\multiput(27.25,47.5)(.0336757991,-.0616438356){438}{\line(0,-1){.0616438356}}
\put(42,20.5){\line(-1,0){28.5}}
\put(27,52){$v_1$}
\put(8,15){$v_2$}
\put(43,15){$v_3$}
\put(65.5,52){\line(0,-1){38}}
\put(56.25,45.5){\line(1,0){51.5}}
\put(71,49){$v_1$}
\put(84,49){$v_2$}
\put(99,49){$v_3$}
\put(57,40){$v_1$}
\put(57,28.75){$v_2$}
\put(57,18){$v_3$}
\put(71,38){$0$}
\put(84,38){$1$}
\put(99,38){$1$}
\put(68,28){$-1$}
\put(84,28){$0$}
\put(96,28){$-1$}
\put(68,18){$-1$}
\put(84,18){$1$}
\put(99,18){$0$}
\end{picture}

\unitlength 0.8mm 
\linethickness{0.4pt}
\ifx\plotpoint\undefined\newsavebox{\plotpoint}\fi 
\hskip .25cm\begin{picture}(148.75,107.25)(0,0)
\put(6.75,100.25){\circle*{1.5}} \put(50.5,87.25){\circle*{1.5}}
\put(7,100.25){\line(1,0){53}}
\multiput(60,100.25)(-.0336879433,-.0460992908){282}{\line(0,-1){.0460992908}}
\put(50.5,87.25){\line(-1,0){34.25}}
\multiput(16.25,87.25)(-.0336700337,.0462962963){297}{\line(0,1){.0462962963}}
\put(6.25,101){\line(0,-1){27}}
\multiput(6.25,74)(6.46875,.03125){8}{\line(1,0){6.46875}}
\multiput(58,74.25)(.03365385,.49519231){52}{\line(0,1){.49519231}}
\multiput(50.5,87)(.03372093,-.059302326){215}{\line(0,-1){.059302326}}
\multiput(16.5,86.75)(-.0336538462,-.0424679487){312}{\line(0,-1){.0424679487}}
\put(59.5,100.25){\circle*{1.5}}
\put(58,75){\circle*{1.5}}
\put(6,74.25){\circle*{1.5}}
\put(16,87.25){\circle*{1.5}}
\put(5,67){$v_1$}
\put(55,67){$v_2$}
\put(55,103.5){$v_3$}
\put(5,103.5){$v_4$}
\put(16,82){$v_5$}
\put(44,82){$v_6$}
\put(-6,86.25){$G:$}
\put(77.5,99){\line(0,-1){53.25}}
\put(77.5,107.25){\line(0,-1){77}}
\put(69.5,101.25){\line(1,0){79.25}}
\put(82,106){$v_1$}
\put(94,106){$v_2$}
\put(106,106){$v_3$}
\put(118,106){$v_4$}
\put(130,106){$v_5$}
\put(142,106){$v_6$}
\put(69,93){$v_1$}
\put(69,81){$v_2$}
\put(69,69){$v_3$}
\put(69,57){$v_4$}
\put(69,45){$v_5$}
\put(69,33){$v_6$}
\put(77.5,46.75){\line(0,-1){3.5}}
\put(77.5,43.25){\line(0,-1){3.25}}
\put(82,93){$0$}
\put(94,93){$1$}
\put(106,93){$0$}
\put(118,93){$1$}
\put(130,93){$1$}
\put(142,93){$0$}
\put(80,81){$-1$}
\put(94,81){$0$}
\put(104,81){$-1$}
\put(118,81){$0$}
\put(130,81){$0$}
\put(140,81){$-1$}
\put(82,69){$0$}
\put(94,69){$1$}
\put(106,69){$0$}
\put(118,69){$1$}
\put(130,69){$0$}
\put(142,69){$1$}
\put(80,57){$-1$}
\put(94,57){$0$}
\put(104,57){$-1$}
\put(118,57){$0$}
\put(130,57){$1$}
\put(142,57){$0$}
\put(80,45){$-1$}
\put(94,45){$0$}
\put(106,45){$0$}
\put(116,45){$-1$}
\put(130,45){$0$}
\put(140,45){$-1$}
\put(82,33){$0$}
\put(94,33){$1$}
\put(104,33){$-1$}
\put(118,33){$0$}
\put(130,33){$1$}
\put(142,33){$0$}
\put(70,20){Figure 5}%
\end{picture}\vspace{-1.5pc}

For example, the d.a.matrix of $K_3$ and the graph $G$ in Figure 5
are $(1,2)$-uniform.\\\\
\emph{Note}:If $M$ is a $(1,2)$-uniform directional adjacency matrix of a graph $G$, then for any two adjacent vertices $v_i$ and $v_j$, $r_M(i) \neq r_M(j)$. One can restate Proposition 5 as follows involving $(1,2)$-uniform d.a.matrix.

\begin{prop} For a graph $G$ the following statements are equivalent.\\
(i) $G$ is 3-colorable.\\
(ii)$G$ has a $(1,2)$-uniform d.a.matrix.\end{prop}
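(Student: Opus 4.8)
The plan is to deduce Proposition~6 from Theorem~2 by setting up a dictionary between directional adjacency matrices of $G$ and directional labelings of the edges of $G$ by the ordered pairs $ab$ and $ba$. Fix the enumeration $V=\{v_1,\dots,v_p\}$. Given a d.a.\ matrix $M=[a_{ij}]$ of $G$, I would define a directional edge labeling by $\ell(v_iv_j)=ab$ whenever $a_{ij}=1$ and $\ell(v_iv_j)=ba$ whenever $a_{ij}=-1$. Because the off-diagonal adjacency recorded by a d.a.\ matrix is antisymmetric, $a_{ij}=1\iff a_{ji}=-1$, so $\ell(v_iv_j)=ab\iff\ell(v_jv_i)=ba$; hence this is a bona fide directional labeling in the sense of the Introduction, and conversely every directional labeling of the edges of $G$ by $ab$, $ba$ arises from exactly one such matrix. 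Thus $M\mapsto\ell$ is a bijection.

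Next I would match the local types. Directly from the definitions, every nonzero entry of the $i$th row of $M$ equals $1$ precisely when $\ell(v_iv_j)=ab$ for every neighbour $v_j$ of $v_i$ (and likewise with $-1$ and $ba$), while the $i$th row contains both a $1$ and a $-1$ precisely when some edge at $v_i$ carries $ab$ and another carries $ba$. In the notation above this says $r_M(i)=1\iff\ell_e(v_i)=ab$, $r_M(i)=-1\iff\ell_e(v_i)=ba$, and $r_M(i)=\{1,-1\}\iff\ell_e(v_i)=\{ab,ba\}$. Consequently, for an adjacent pair $v_i,v_j$, the three alternatives (i)--(iii) in the definition of a $(1,2)$-uniform d.a.\ matrix coincide clause by clause with the three alternatives (i)--(iii) of Definition~2 applied to the associated labeling. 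Therefore $G$ has a $(1,2)$-uniform d.a.\ matrix if and only if $G$ admits a $(1,2)$-uniform directional edge labeling by $ab$ and $ba$.

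Finally I would invoke Theorem~2, which asserts that this last condition is equivalent to $G$ being $3$-colorable; chaining the two equivalences gives (i)~$\Leftrightarrow$~(ii). For completeness I would also record that the parenthetical remark preceding the statement, $r_M(i)\neq r_M(j)$ for adjacent $v_i$ and $v_j$, is immediate, since each of (i)--(iii) forces the two row-types to differ --- which is exactly the translation of the observation $\ell_e(v_1)\neq\ell_e(v_2)$ used inside the proof of Theorem~2. If an explicit witness is desired for the direction $(i)\Rightarrow(ii)$, it suffices to transport through the dictionary the labeling produced in the proof of Theorem~2 from a $3$-colouring $V=V_1\cup V_2\cup V_3$, which yields a d.a.\ matrix with $r_M=1$ on $V_1$, $r_M=-1$ on $V_2$, and $r_M=\{1,-1\}$ on $V_3$.

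I do not anticipate a real obstacle: all the combinatorial substance is already in Theorem~2, and what remains is to check that the dictionary is exhaustive and self-consistent. The two points that warrant care are that a d.a.\ matrix attaches a nonzero entry to \emph{every} edge of $G$ (so that ``type of the $i$th row'' and ``type of the vertex $v_i$'' always refer to the same edge set), and that antisymmetry of the off-diagonal entries is exactly the constraint making $M$ correspond to a directional labeling rather than to an arbitrary $\{0,1,-1\}$-matrix.
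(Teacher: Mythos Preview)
Your approach is exactly what the paper intends: it offers no separate proof of this proposition but simply declares it a restatement of Theorem~2 in the language of d.a.\ matrices, and your dictionary between directional edge labelings and (antisymmetric) d.a.\ matrices, together with the identification $r_M(i)\leftrightarrow\ell_e(v_i)$, makes that restatement precise. The only minor caveat is that the paper's formal definition of a d.a.\ matrix does not literally force $a_{ij}=-a_{ji}$ (it even names the ``symmetric'' case), so your bijection is really with the antisymmetric d.a.\ matrices---but this is harmless, since your row-type coloring for $(ii)\Rightarrow(i)$ and your transported construction for $(i)\Rightarrow(ii)$ go through regardless.
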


\end{document}